\theoremstyle{plain}
\newtheorem{lemma}{Lemma}[section] 
\newtheorem{theorem}[lemma]{Theorem}
\newtheorem{proposition}[lemma]{Proposition}
\theoremstyle{definition}
\newtheorem{remark}[lemma]{Remark}
\newtheorem{example}[lemma]{Example}
\newcommand\Zset{\mathbb {Z}}
\newcommand{\so}{\mathbf{s}}
\newcommand{\ra}{\mathbf{r}}
\newcommand{\ann}{\operatorname{ann}}
\newcommand{\gr}{\operatorname{gr}}
\newcommand{\ol}{\overline}
\begin{document}
\title{Graph characterization of the annihilator ideals of Leavitt path algebras}

\author{Lia Va\v s}
\address{Department of Mathematics, Saint Joseph's University, Philadelphia, PA 19131, USA}
\email{lvas@sju.edu}

\subjclass{16S88, 16D25, 16D70} 

\keywords{Annihilator, graded, ideal, Leavitt path algebra, graded envelope, quasi-Baer}  

\begin{abstract}
If $E$ is a graph and $K$ is a field, we consider an ideal $I$ of the Leavitt path algebra $L_K(E)$ of $E$ over $K$. We describe the admissible pair corresponding to the smallest graded ideal which contains $I$ where the grading in question is the natural grading of $L_K(E)$ by $\mathbb Z$. Using this description, 
we show that the right and the left annihilators of $I$ are {\em equal} (which can be somewhat surprising given that $I$ may not be self-adjoint). In particular, we establish that both  annihilators correspond to the same admissible pair and its description produces the characterization from the title. Then, we turn to the property that the right (equivalently left) annihilator of any ideal is a direct summand and recall that a unital ring with this property is said to be quasi-Baer. We
exhibit a condition on $E$ which is equivalent to unital $L_K(E)$ having this property. 
\end{abstract}

\maketitle

\section{Introduction}

If $R$ is a ring (associative but not necessarily unital) and $M$ is a left $R$-module, then $\ann_l(M)=\{r\in R\mid rm=0$ for all $m\in M\}$ is a two-sided ideal of $R$ called the {\em left annihilator} of $M.$ Similarly, if $N$ is a right $R$-module, then the ideal $\ann_r(N)=\{r\in R\mid nr=0$ for all $n\in N\}$ is called the {\em right annihilator} of $N.$ If $B$ is both a left and a right $R$-module, $\ann(B)=\{r\in R\mid rb=br=0\mbox{ for any }b\in B\}$ is the {\em annihilator} of $B.$

The ideals which are annihilators of other ideals have been called {\em annihilator ideals}. Annihilator ideals of a Leavitt path algebra have recently been studied in \cite{Malazani}, \cite{Goncalves_Royer_regular_ideals} and \cite{Lia_annihilators}.
If $E$ is a graph and $K$ is a field and if the Leavitt path algebra $L_K(E)$ is considered naturally graded by the group of integers, then  $\ann(I)$ is graded for any ideal $I$ (not necessarily graded) of $L_K(E)$ 
by \cite[Theorem 3.3]{Goncalves_Royer_regular_ideals}. By \cite[Proposition 3.1]{Lia_annihilators}, the same holds for $\ann_l(I)$ and $\ann_r(I).$ However, while these results establish that $\ann_l(I),$ $\ann_r(I),$ and $\ann(I)$ are graded, the exact relations between these three graded ideals have not yet been established. 

In this paper, we establish such exact relations by describing the corresponding admissible pairs of the three annihilators. Namely, any graded ideal of $L_K(E)$ is uniquely determined by a pair of two sets of vertices of the graph $E$, known as an {\em admissible pair}. 
In Theorem  \ref{theorem_annihilators}, we show that all three annihilators $\ann_l(I),$ $\ann_r(I),$ and $\ann(I)$ correspond to the {\em same} admissible pair. So, the three annihilators are {\em equal}. On one hand, this may not be that surprising knowing that $L_K(E)$ is an involutive algebra and, as such, has certain left-right symmetry. On the other hand, this may be surprising given that an ideal of $L_K(E)$ is not necessarily self-adjoint.  

To prove Theorem \ref{theorem_annihilators}, we consider the smallest graded ideal $I^{\gr}$ which contains an ideal $I.$ 
In Theorem \ref{theorem_graded_envelope} (which can be relevant in its own right), we describe $I^{\gr}$ in terms of its admissible pair. Then, we use \cite[Proposition 3.5]{Lia_annihilators} which exhibits the admissible pair of the annihilator of a graded ideal. Using this result, we prove Theorem \ref{theorem_annihilators} stating that if $I^{\gr}=I(H,S),$ if $H^\bot$ is the set of vertices which do not emit paths to $H,$ and if $B_{H^\bot}$ is the set of breaking vertices of $H^\bot$ (we review this concept in section \ref{subsection_ideals_prerequisites}) then 
\[\ann_l(I)=\ann_r(I)=\ann(I)=\ann(I^{\gr})=I(H^\bot, B_{H^\bot}).\]

In section \ref{section_quasi_baer}, we turn to the ring-theoretic condition that the left (equivalently right) annihilator of an ideal is a direct summand.  If $R$ is a unital ring, this condition is equivalent with the requirement that, for any right ideal $I,$ there is an idempotent $\varepsilon\in R$ such that $\ann_r(I)=\varepsilon R$ and a ring which satisfies this property is said to be right {\em quasi-Baer}. In \cite[Lemma 1]{Clark}, it is shown that this definition is left-right symmetric and that an equivalent statement is obtained by requiring $I$ to be a double-sided ideal. Because of this, the left-right specification in front of ``quasi-Baer'' can be dropped. If $R$ has this property, then $R$ is unital (the identity is an idempotent obtained for $I=0$).

We generalize this concept to graded rings and establish the relation between quasi-Baer and graded quasi-Baer properties in Proposition \ref{proposition_graded_vs_nongraded_quasi_Baer}. Turning to Leavitt path algebras, we characterize when $L_K(E)$ is (graded) quasi-Baer in terms of the conditions on $E$ (Proposition \ref{proposition_quasi_baer}). Considering $L_K(E)$ as an involutive ring, we show that these conditions on $E$ also characterize when $L_K(E)$ is a (graded) quasi-Baer $*$-ring (Proposition \ref{proposition_star_qB}). 

\section{Prerequisites and preliminaries}

\subsection{Graded rings}
A ring $R$ (not necessarily unital) is {\em graded} by a group $\Gamma$ if $R=\bigoplus_{\gamma\in\Gamma} R_\gamma$ for additive subgroups $R_\gamma$ and $R_\gamma R_\delta\subseteq R_{\gamma\delta}$ for all $\gamma,\delta\in\Gamma.$ The elements $\bigcup_{\gamma\in\Gamma} R_\gamma$ are {\em homogeneous}. A left (right, double-sided) ideal $I$ of $R$ is {\em graded} if $I=\bigoplus_{\gamma\in \Gamma} I\cap R_\gamma.$ 

\subsection{Graphs and Leavitt path algebras}
If $E$ is a directed graph, we let $E^0$ denote the set of vertices, $E^1$ denote the set of edges, and $\so$ and $\ra$ denote the source and the range maps of $E.$ We adopt the standard definitions of a sink, an infinite emitter, a regular vertex, a finite graph, a path in a graph, and a cycle of $E$ (see \cite{LPA_book} for any of those). For $V\subseteq E^0,$
the {\em root} $R(V)$ of $V$ is the set of vertices $v\in E^0$ such that $\so(p)=v$ and $\ra(p)\in V$ for some path $p.$   

We also adopt the standard definition of the Leavitt path algebra $L_K(E)$ of a graph $E$ over a field $K$ (\cite[Definition 1.2.3]{LPA_book}). Recall that a ring $R$ is {\em locally unital} if for every finite set $F\subseteq R,$ there is an idempotent $\varepsilon\in R$ such that $F\subseteq \varepsilon R\varepsilon.$ The algebra $L_K(E)$ is locally unital (with the finite sums of vertices as the local units). The algebra $L_K(E)$ is unital if and only if $E^0$ is finite in which case the sum of all vertices is the identity.
If we consider $K$ to be trivially graded by $\Zset,$ $L_K(E)$ is naturally graded by $\Zset$ so that the $n$-component $L_K(E)_n$ is the $K$-linear span of the elements $pq^\ast$ for paths $p, q$ with $|p|-|q|=n$ where $|p|$ denotes the length of a path $p.$ 

\subsection{Ideals and graded ideals of a Leavitt path algebra} \label{subsection_ideals_prerequisites}

A subset $H$ of $E^0$ is said to be {\em hereditary} if $\ra(p)\in H$ for any path $p$ such that $\so(p)\in H.$ The set $H$ is {\em saturated} if $v\in H$ for any regular vertex $v$ such that $\ra(\so^{-1}(v))\subseteq H.$ If $H$ is hereditary, let  
\[B_H=\{v\in E^0-H\,|\, v\mbox{ is an infinite emitter and }\so^{-1}(v)\cap \ra^{-1}(E^0-H)\mbox{ is nonempty and finite}\}\]
and, for $v\in B_H,$ let $v^H$ stand for $v-\sum ee^*$ where the sum is taken over $e\in \so^{-1}(v)\cap \ra^{-1}(E^0-H).$ 

An {\em admissible pair} is a pair $(H, S)$ where $H\subseteq E^0$ is hereditary and saturated and $S\subseteq B_H.$ 
For an admissible pair $(H,S)$, if $S^H$ denotes the set $\{v^H\mid v\in S\}$ and  $I(H,S)$ denotes the ideal generated by the elements $H\cup S^H,$ then $I(H,S)$   is graded and it is the $K$-linear span of the elements $pq^*$ for paths $p,q$ with $\ra(p)=\ra(q)\in H$ and the elements $pv^Hq^*$ for paths $p,q$ with $\ra(p)=\ra(q)=v\in S$ (see \cite[Theorem 2.4.8]{LPA_book}). The converse holds as well: for a graded ideal $I$, the set $H=I\cap E^0$ is hereditary and saturated and, if $S=\{v\in B_H\mid v^H\in I\},$ then  $I=I(H,S)$ (\cite[Theorem 2.5.8]{LPA_book}). The
lattice of graded ideals is isomorphic to the lattice of admissible pairs with the relation 
\[(H,S)\leq (K, T) \mbox{ if }H\subseteq K\mbox{ and }S\subseteq K\cup T.\] 

If $I$ is an ideal (not necessarily graded), it is uniquely determined by an admissible pair $(H,S)$ where  $H=I\cap E^0$ and $S=\{v\in B_H\mid v^H\in I\},$ by a set $C$ contained in the set $C_H$ of cycles with vertices outside of $H$ such that every exit from $c\in C_H$ has the range in $H,$ and by a sets $P$ contained in the set of non-constant polynomials in $K[x]$ with the $0$-th coefficient $1_K$ (see \cite[Theorem 2.8.10]{LPA_book}). In this case, we write that $I=I((H,S), C, P).$
Such sets $C$ and $P$ determine the set $P_C$ of  elements $p(c)$ for $c\in C$ and $p\in P.$ 
By \cite[Proposition 2.8.5 and Theorem 2.8.10]{LPA_book}, if $I=I((H,S), C, P)$ is an ideal of $L_K(E),$
then $I$ is generated by  $H\cup S^H\cup P_C.$

\subsection{Annihilator ideals} 
If $I$ is an ideal of $L_K(E),$ then $\ann_l(I),$ $\ann_r(I),$ and $\ann(I)$ are graded ideals of $L_K(E)$ by \cite[Theorem 3.3]{Goncalves_Royer_regular_ideals} and \cite[Proposition 3.1]{Lia_annihilators}. By \cite[Corollary 3.3 and Proposition 3.5]{Lia_annihilators}, if $I(H,S)$ is a graded ideal of $L_K(E)$ and if we let $H^\bot=E^0-R(H)$ and $S^\bot=B_{H^\bot}-S,$ then 
\[\ann_l(I(H,S))=\ann_r(I(H,S))=\ann(I(H,S))=I(H^\bot, S^\bot).\]
\begin{remark}
We claim that $S^\bot=B_{H^\bot}.$ Indeed, if  $v\in B_{H^\bot},$ then $v$ emits infinitely many edges to $H^\bot$ and nonzero and finitely many to $E^0-H^\bot=E^0-(E^0-R(H))=R(H).$ So, $v$ emits only finitely many edges to $H.$ Hence, $v\notin B_H,$ so $v\notin S.$
This shows that 
\[\ann(I(H,S))=I(H^\bot, B_{H^\bot})\]
so $\ann(I(H,S))$ does not depend on $S.$ 
\label{remark_on_S_bot} 
\end{remark}

In \cite{Lia_annihilators}, an admissible pair $(H,S)$ is said to be {\em reflexive} if $(H,S)=(H^{\bot\bot}, S^{\bot\bot}).$ 
By \cite[Proposition 3.10]{Lia_annihilators}, a graded ideal $I=I(H,S)$ is an annihilator ideal if and only if $(H,S)$ is reflexive and, if so, then $S=B_H.$ The above remark also implies this last formula.   

\subsection{ \texorpdfstring{$S$}{TEXT}-saturation of a hereditary set}
\label{subsection_saturation}
If $H$ is a hereditary set,  $S$ is any set of vertices, and $G$ is any hereditary set which contains $H$, let $G^S$ denote the set 
$\{v\in S\mid \ra(\so^{-1}(v))\subseteq G\}.$ The {\em $S$-saturation} $\ol H^S$ of $H$ is  the smallest hereditary and saturated set $G$ which contains $H$ and such that $G^S\subseteq G.$ Note that such smallest set $G$ exists 
since the intersection of the hereditary and saturated sets $G'$ which contain $H$ and satisfy the relation $(G')^S\subseteq G'$ retains all these properties of $G'.$ 
The set $\ol H^S$ can also be defined by an iterative process: if $\Lambda_0^S(H)=H$ and 
\[\Lambda_{n+1}^S(H)=\Lambda^S_n(H)\cup\{v\in E^0-\Lambda_n^S(H)\mid v \mbox{ is either regular or in }S\mbox{ and }\ra(\so^{-1}(v))\subseteq \Lambda_n^S(H)\},\] then $\ol H^S=\bigcup_{n=0}^\infty \Lambda_n^S(H).$ The inclusion $\subseteq$ holds since the union $U=\bigcup_{n=0}^\infty \Lambda_n^S(H)$ is a hereditary and saturated set which contains $H$ and such that $U^S\subseteq U.$ The converse inclusion holds since the induction can be used to show $\Lambda_n^S(H)\subseteq \ol H^S.$ 

When introducing $\ol H^S$ in \cite[Definition 2.5.5]{LPA_book}, it is assumed  that $S\subseteq H\cup B_H.$ We do not require this condition to hold because in cases when $S\nsubseteq H\cup B_H,$ we would still like to have the $S$-saturation of $H$ defined.
For example, if $E$ is the graph below (the symbol $(\infty)$ above an edge $e$ denotes infinitely many edges from $\so(e)$ to $\ra(e)$)
\[\xymatrix{&\bullet^v\ar[dr]^{(\infty)}&\\\bullet^u\ar@(ul, dl)\ar[ur]\ar[rr]^{(\infty)}&&\bullet^w}\]
and if $H=\{w\},$ then $B_H=\{u\}.$ For $S=\{v\},$  
$S\nsubseteq H\cup B_H=\{u,w\}$ and $H^S=\{v\}\nsubseteq H.$ The set $G=\{v,w\}$ is a hereditary and saturated set which contains $H,$ $G^S=\{v\}\subseteq G,$ and $G$ is the smallest such set, so $\ol H^S=G.$

\section{Graded envelope and annihilator ideals}

For the rest of the paper, we fix a graph $E$ and a field $K.$ For an ideal $I$ (not necessarily graded) of the Leavitt path algebra $L_K(E)$ of $E$ over $K$, it is known that there is the largest graded ideal $I_{\gr}$ contained in $I.$ If $H=I\cap E^0$ and $S=\{v\in B_H\mid v^H\in I,\}$ then $I_{\gr}=I(H,S)$ (see \cite[Lemma 2.8.9]{LPA_book}). We consider  the dual concept: the smallest graded ideal $I^{\gr}$ which contains $I.$ Such an ideal exists since the intersection of all graded ideals which contain $I$ is a graded ideal and the term {\em graded envelope} of $I$ would be suitable for $I^{\gr}$. 
The graded envelope 
can also be characterized as the ideal generated by the homogeneous components of the elements of $I,$ i.e. by the set $A=\{a_n\mid a\in I, n\in \Zset\}.$ Indeed, the ideal $I(A)$ generated by $A$ is graded because $A$ consists of  homogeneous elements. The  ideal $I(A)$ contains the elements of $I$ and it is the smallest such ideal since if $ J$ is graded and contains $I,$ 
then any $a\in I$ is such that $a_n\in J,$ for all $n\in\Zset,$ so $A\subseteq J.$ 

We describe the graded envelope in terms of the corresponding admissible pair next.

\begin{theorem}
Let $I$ be an ideal of $L_K(E)$ and let $(H,S),$ $C,$ and $P$ be such that $I=I((H,S), C, P).$ Let $C^0$ be the set of vertices on cycles which are in $C$ and let 
\[G=\ol{H\cup C^0}^{S}\;\;\mbox{ and }\;\;T=S-G.\]
Then $T\subseteq B_G,$ $(H,S)\leq (G,T),$ and
\[I^{\gr}=I(G,T).\]
\label{theorem_graded_envelope}
\end{theorem}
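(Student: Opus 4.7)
The plan is to prove the three assertions in sequence, with the main work in the equality $I^{\gr}=I(G,T)$ which I would split into two inclusions.

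For $T\subseteq B_G$: if $v\in T\subseteq S\subseteq B_H$, then $v$ is an infinite emitter and $\so^{-1}(v)\cap\ra^{-1}(E^0-H)$ is finite and nonempty; the containment $H\subseteq G$ makes $\so^{-1}(v)\cap\ra^{-1}(E^0-G)$ a subset of the latter, hence finite. Nonemptiness follows by contradiction: if $\ra(\so^{-1}(v))\subseteq G$, then $v\in G^S\subseteq G$ by the defining property of the $S$-saturation, contradicting $v\notin G$. The inequality $(H,S)\leq(G,T)$ is immediate from $H\subseteq G$ together with the disjoint partition $S=(S\cap G)\cup T\subseteq G\cup T$.

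For $I^{\gr}\subseteq I(G,T)$: since $I(G,T)$ is graded, it suffices to show that each element of the generating set $H\cup S^H\cup P_C$ of $I$ lies in $I(G,T)$. The inclusion $H\subseteq G$ handles $H$; for $v\in S$, the case $v\in G$ is trivial, and the case $v\in T$ is handled by the identity $v^H-v^G=-\sum_{e\in\so^{-1}(v)\cap\ra^{-1}(G-H)}ee^*$, which is a finite sum of elements of $I(G,T)$ (since each $\ra(e)\in G$), placing $v^H\in I(G,T)$. The polynomial generators lie in $I(G,T)$ because every vertex on every $c\in C$ lies in $C^0\subseteq G\subseteq I(G,T)$.

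For the reverse inclusion $I(G,T)\subseteq I^{\gr}$, the key step is $G\subseteq I^{\gr}$, which I would prove by induction on the layers of the iterative description $G=\bigcup_n\Lambda_n^S(H\cup C^0)$, after first noting that $H\cup C^0$ is hereditary (since any edge from a vertex on a cycle $c\in C$ either stays on $c$ or is an exit whose range lies in $H$). For the base case: $H\subseteq I\subseteq I^{\gr}$ is trivial, and for each $c\in C$ the element $p(c)=\so(c)+\sum_{i\geq 1}a_ic^i\in I$ has $\so(c)$ as its degree-$0$ homogeneous component, so $\so(c)\in I^{\gr}$ by the homogeneous-component characterization of $I^{\gr}$; the remaining vertices on $c=e_1\cdots e_n$ then enter $I^{\gr}$ via the identity $\ra(e_k)=e_k^*\cdots e_1^*\so(c)e_1\cdots e_k$. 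For the inductive step, a new vertex $v\in\Lambda_{n+1}^S-\Lambda_n^S$ satisfies $\ra(\so^{-1}(v))\subseteq\Lambda_n^S\subseteq I^{\gr}$, and is either regular, in which case $v=\sum_{e\in\so^{-1}(v)}ee^*\in I^{\gr}$, or in $S$, in which case the definition of $S$ gives $v^H\in I\subseteq I^{\gr}$, and the rearrangement $v=v^H+\sum_{e\in\so^{-1}(v)\cap\ra^{-1}(E^0-H)}ee^*$ combined with the inductive hypothesis yields $v\in I^{\gr}$. The containment $T^G\subseteq I^{\gr}$ then follows from $v^G=v^H+\sum_{e\in\so^{-1}(v)\cap\ra^{-1}(G-H)}ee^*$ for $v\in T$, with $v^H\in I\subseteq I^{\gr}$ and each $\ra(e)\in G\subseteq I^{\gr}$.

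The main obstacle is closing the inductive step uniformly across regular and $S$-type new vertices in the $S$-saturation, with the $S$-type case essentially relying on $v^H\in I$ being encoded into the data $(H,S)$ defining $I$. A secondary subtlety is the $C^0$ base case, where one must both extract the cycles' base vertices as degree-$0$ components of elements of $I$ and then propagate to the remaining cycle vertices through the ideal structure.
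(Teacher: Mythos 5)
Your proposal is correct and follows essentially the same route as the paper's proof: the same contradiction argument for $T\subseteq B_G$, reduction of $I^{\gr}\subseteq I(G,T)$ to the generators $H\cup S^H\cup P_C$, and induction on the layers $\Lambda_n^S(H\cup C^0)$ of the saturation, with $\so(c)$ extracted as the degree-$0$ homogeneous component of $p(c)$ and the remaining cycle vertices reached via $p^*\so(c)p$. The only (harmless) variations are your explicit $v^H-v^G$ identity where the paper simply invokes $(H,S)\leq(G,T)$, and your useful side remark that $H\cup C^0$ is hereditary.
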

\begin{proof}
If $v\in T=S-G,$ then $v\in B_H,$ so $v$ is an infinite emitter emitting infinitely many edges to $H$ and nonzero and finitely many to $E^0-H.$ So, $v$ emits infinitely many edges to $G$ and finitely many, say $n$, edges to $E^0-G.$ For $v\in B_G$ to hold, we need to show that $n>0.$  Assume, on the contrary, that $n=0.$ Then $\ra(\so^{-1}(v))\subseteq G$ so that $v$ is in $G^S.$ As $G^S\subseteq G,$ $v\in G.$ This contradicts the assumption that $v\in T=S-G.$ Hence, $v\in B_G.$

By the definition of $G$ and $T,$ $H\subseteq G$ and $S\subseteq G\cup T.$ So, $(H,S)\leq (G,T).$  

For the inclusion $I^{\gr}\subseteq I(G,T),$ it is sufficient to prove that $I\subseteq I(G,T).$ As $I$ is generated by $H\cup S^H\cup P_C,$ it is sufficient to prove that $H,$ $S^H,$ and $P_C$ are  contained in $I(G,T).$
Since $(H,S)\leq (G,T),$ $H\cup S^H\subseteq I(G,T).$ 
For $c\in C,$ $\so(c)\in C^0\subseteq G,$ so $c\in I(G,T)$ which implies that $p(c)\in I(G,T)$ for any $p\in P.$

For the inclusion $I(G,T)\subseteq I^{\gr},$ it is sufficient to show that $G\subseteq I^{\gr}$ and that $T^G\subseteq I^{\gr}.$ Let $\Lambda_n, n=0,1,\ldots,$ be the sets $\Lambda_n^S(H\cup C^0)$ defined as in section \ref{subsection_saturation} which have the union equal to $G$. We use induction to show that $\Lambda_n\subseteq I^{\gr}$ for any $n.$ For $n=0,$ we show that $H\cup C^0\subseteq I^{\gr}.$ If $v\in H,$ then  $v\in I$ by the definition of $H,$ so $v\in I^{\gr}.$ If $v\in C^0,$ then there is $c\in C$ and $p\in P$ such that $v$ is a vertex in $c$ and $p(c)\in I.$ 
As the $0$-component of $p(c)$ is $\so(c),$ $\so(c)\in I^{\gr}.$ If $p$ is a part of $c$ from $\so(c)$ to $v,$ then  $p=\so(c)p\in I^{\gr}$ which implies that $v=\ra(p)=p^*p\in I^{\gr}.$

Assuming that $\Lambda_n\subseteq I^{\gr},$ let $v\in \Lambda_{n+1}.$ If $v\in \Lambda_n,$ then $v\in I^{\gr}.$ If $v\in \Lambda_{n+1}-\Lambda_n,$ then $\ra(\so^{-1}(v))\subseteq \Lambda_n,$ so $\ra(\so^{-1}(v))\subseteq I^{\gr}$ by the induction hypothesis. Thus, for any $e\in \so^{-1}(v),$   $ee^*=e\ra(e)e^*\in I^{\gr}.$ If $v$ is regular, then $v=\sum_{e\in\so^{-1}(v)} ee^*\in I^{\gr}.$ If $v$ is in $S,$ then $v^H\in I\subseteq I^{\gr}.$ As $v$ emits no edges outside of $\Lambda_n,$ $v^H=v-\sum_{e\in \so^{-1}(v)\cap \ra^{-1}(\Lambda_n-H)}ee^*$ and $\so^{-1}(v)\cap \ra^{-1}(\Lambda_n-H)$ is finite. We have that both $v^H$ and $\sum_{e\in \so^{-1}(v)\cap \ra^{-1}(\Lambda_n-H)}ee^*$ are in $I^{\gr}.$ So, $v\in I^{\gr}.$ 
  
It remains to show that $T^G\subseteq I^{\gr}.$ If $v^G\in T^G$ then $v\in T=S-G,$ so $v^H\in I\subseteq I^{\gr}.$ The set
$\so^{-1}(v)\cap \ra^{-1}(G-H)$ is finite and $v^G=v^H+\sum_{e\in \so^{-1}(v)\cap \ra^{-1}(G-H)}ee^*,$ and, as $\ra(e)\in G\subseteq I^{\gr}$ for $e\in e\in \so^{-1}(v)\cap \ra^{-1}(G-H),$ both $v^H$ and $\sum_{e\in \so^{-1}(v)\cap \ra^{-1}(G-H)}ee^*$ are in $I^{\gr}.$ So, $v^G\in I^{\gr}.$   
\end{proof}

Using Theorem \ref{theorem_graded_envelope}, we prove the result from the title of the paper. 

\begin{theorem}
If $I=I((H,S), C, P)$ is an ideal of $L_K(E)$ and $G=\ol{H\cup C^0}^S,$ then \[\ann_l(I)=\ann_r(I)=\ann(I)=\ann(I^{\gr})=I(G^\bot, B_{G^\bot}).\]
\label{theorem_annihilators}
\end{theorem}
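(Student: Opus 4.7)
The plan is to reduce everything to a comparison of graded admissible pairs. Applying the formula $\ann(I(H,S))=I(H^\bot,B_{H^\bot})$ from the prerequisites to $I^{\gr}=I(G,T)$ (given by Theorem \ref{theorem_graded_envelope}) yields
\[\ann_l(I^{\gr})=\ann_r(I^{\gr})=\ann(I^{\gr})=I(G^\bot, B_{G^\bot}).\]
From $I\subseteq I^{\gr}$ the chain $\ann(I^{\gr})\subseteq\ann(I)\subseteq\ann_l(I)\cap\ann_r(I)$ is automatic, so the task reduces to proving the reverse inclusions $\ann_l(I),\ann_r(I)\subseteq I(G^\bot, B_{G^\bot})$. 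Both $\ann_l(I)$ and $\ann_r(I)$ are graded (by the prerequisites), so each equals $I(H_i,S_i)$ for an admissible pair. The already-established containment $\ann(I^{\gr})\subseteq \ann_l(I)\cap\ann_r(I)$ gives $G^\bot\subseteq H_i$ and $B_{G^\bot}\subseteq S_i$; the matching reverse containment $S_i\subseteq B_{H_i}=B_{G^\bot}$ then holds automatically once we have $H_i\subseteq G^\bot$.

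The real content is therefore the implication: if $v\in R(G)$, then $vI\neq 0$ and $Iv\neq 0$. I would handle the left-annihilator version first, by induction on $n$ with $v\in\Lambda_n^S(H\cup C^0)$, then extend along paths. If $v\in H$, then $v\in I$ and $v\cdot v=v\neq 0$. If $v\in C^0$, pick $c\in C$ and $p\in P$ with $p(c)\in I$ and factor $c=\rho\sigma$ so that $\rho$ is the initial arc from $\so(c)$ to $v$; using $\rho^*\rho=v$ one verifies $\rho^*c^i\rho=(\sigma\rho)^i$, so $\rho^* p(c)\rho=p(c')\in I$ where $c'=\sigma\rho$ is the cycle based at $v$, and $v\cdot p(c')=p(c')\neq 0$. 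For the inductive step $v\in\Lambda_{n+1}^S\setminus\Lambda_n^S$, every $e\in\so^{-1}(v)$ satisfies $\ra(e)\in\Lambda_n^S$, so the hypothesis supplies $y\in I$ with $\ra(e)y\neq 0$; then $ey\in I$ is nonzero because $e^*(ey)=\ra(e)y\neq 0$, and $v\cdot ey=ey$. Finally, for $v\in R(G)\setminus G$ choose a path $q$ from $v$ to some $u\in G$ and $y\in I$ with $uy\neq 0$; then $qy\in I$ is nonzero because $q^*(qy)=uy$, and $v\cdot qy=qy$.

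The $\ann_r$-version is completely symmetric, with $ye^*$ and $yq^*$ replacing $ey$ and $qy$ (using $(ye^*)e=y\ra(e)$ and $(yq^*)q=yu$). This gives $H_1=H_2=G^\bot$, hence $S_1=S_2=B_{G^\bot}$, so $\ann_l(I)=\ann_r(I)=I(G^\bot,B_{G^\bot})=\ann(I^{\gr})$, and $\ann(I)=\ann_l(I)\cap\ann_r(I)$ coincides with the same ideal. The main obstacle, in my view, is the $v\in C^0$ base case: one must transport a polynomial-in-a-cycle relation from $\so(c)$ to an arbitrary vertex of the cycle, which requires the reparametrization identity $\rho^*c^i\rho=(\sigma\rho)^i$ together with the fact that $p(c')$ does not vanish in $L_K(E)$. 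Once these are in hand, the saturation-level induction is forced by the definition of $\Lambda_n^S$, and the Cuntz-Krieger relation $e^*e=\ra(e)$ takes care of the ``push-up'' from $\ra(e)$ to $v$.
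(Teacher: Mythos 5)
Your proof is correct and follows the paper's overall architecture: compute $\ann(I^{\gr})=I(G^\bot,B_{G^\bot})$ from Theorem \ref{theorem_graded_envelope} together with the graded case, obtain one chain of inclusions from $I\subseteq I^{\gr}$, and obtain the reverse inclusions by showing that the vertex set of the graded ideal $\ann_l(I)$ (resp.\ $\ann_r(I)$) is contained in $G^\bot=E^0-R(G)$. Where you genuinely diverge is in how that last containment is established. The paper takes $v\in\ann_r(I)\cap E^0$ lying on a path $p$ into $G$, notes $pp^*=p\ra(p)p^*\in I^{\gr}$, and then extracts a single $a\in I$ with $a_0=pp^*$, so that $pp^*=a_0v=0$ gives the contradiction; this leans on the description of $I^{\gr}$ as the ideal generated by the homogeneous components of elements of $I$, and the extraction of such an $a$ is the one step stated rather tersely there. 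You instead re-run the $\Lambda_n^S(H\cup C^0)$ induction inside $I$ itself, producing for each $v\in R(G)$ an explicit element of $I$ not killed by $v$ on the appropriate side: the reparametrization $\rho^*p(c)\rho=p(\sigma\rho)$ (valid since $\rho^*\rho=v$ and the $0$-component of $p(\sigma\rho)$ is $v\neq 0$) handles the base case $v\in C^0$, the relation $e^*e=\ra(e)$ handles the saturation step, and $q^*q=\ra(q)$ transports the witness along paths. This is more work --- it essentially repeats the induction already carried out in the proof of Theorem \ref{theorem_graded_envelope} --- but it is self-contained and sidesteps the ``$a_0=pp^*$ for some $a\in I$'' extraction entirely. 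One cosmetic slip: the containment $\ann(I^{\gr})\subseteq\ann_r(I)$ gives $B_{G^\bot}\subseteq H_i\cup S_i$ under the lattice order on admissible pairs, not $B_{G^\bot}\subseteq S_i$ at that stage; but since you only need $H_i=G^\bot$ and $S_i\subseteq B_{H_i}$ to conclude $(H_i,S_i)\leq(G^\bot,B_{G^\bot})$, nothing is affected.
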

\begin{proof} 
If $I$ and $G$ are as in the assumption of the theorem, then $I^{\gr}=I(G,S-G)$ by Theorem \ref{theorem_graded_envelope}.  The relation $\ann(I^{\gr})=I(G^\bot, B_{G^\bot})$ holds by  \cite[Proposition 3.5]{Lia_annihilators} and Remark \ref{remark_on_S_bot}.

Recall that $\ann_l(I), \ann_r(I), $ and $\ann(I)$ are graded ideals by \cite[Proposition 3.1]{Lia_annihilators}.
As $I\subseteq I^{\gr}$ and  $\ann_l(I^{\gr})=\ann_r(I^{\gr})=\ann(I^{\gr})$ (by \cite[Corollary 3.3]{Lia_annihilators}), we have  that 
\[\ann(I^{\gr})\subseteq \ann_l(I),\;\;\ann(I^{\gr})\subseteq \ann_r(I), \;\mbox{ and }\;\ann(I^{\gr})\subseteq \ann(I).\]
For the converse inclusions, we show that $\ann_r(I)\subseteq \ann(I^{\gr}).$ The inclusion $\ann_l(I)\subseteq \ann(I^{\gr})$ follows by symmetry of the proof and these two imply that 
$\ann(I)\subseteq \ann_l(I)\cap \ann_r(I)\subseteq \ann(I^{\gr}).$

If $G'=\ann_r(I)\cap E^0$ and $T'=\{v\in B_{G'}\mid v^{G'}\in \ann_r(I)\},$ then showing $G'\subseteq G^\bot$ and $T'\subseteq B_{G^\bot}$ is sufficient for $\ann_r(I)\subseteq \ann(I^{\gr}).$ 
Let $v\in G'$ so that $Iv=0.$ For $v\in G^\bot=E^0-R(G),$ we need to show that $v\notin R(G).$ Assume, on the contrary, that $v\in R(G)$ so that there is a path $p$ from $v$ to a vertex of $G.$ As $\ra(p)\in G\subseteq I^{\gr},$ we have that $pp^*=p\ra(p)p^*\in I^{\gr}.$ So, there is $a\in I$ such that $a_0=pp^*.$ Since $a\in I$ and $v\in \ann_r(I),$ $av=0.$ If $a=\sum_{n\in \Zset} a_n,$ then  $a_nv=0$ for any $n\in \Zset.$ In particular, $a_0v=0,$ so $pp^*=pp^*v=0.$ This is a contradiction since $0\neq p=pp^*p.$ Thus, $v\notin R(G)$ which shows that $G'\subseteq G^\bot.$
This inclusion and the inclusion $I(G^\bot, T^\bot)=\ann(I^{\gr})\subseteq \ann_r(I)=I(G', T')$ imply that $G'=G^\bot.$ Hence, $ T'\subseteq B_{G'}=B_{G^\bot}.$ 
\end{proof}

\section{Quasi-Baer Leavitt path algebras}
\label{section_quasi_baer}

\subsection{Graded quasi-Baer property} If $R$ is a $\Gamma$-graded ring, we say that $R$ is {\em graded quasi-Baer} if for any graded right ideal $I,$ there is a homogeneous idempotent $\varepsilon\in R$ such that $\ann_r(I)=\varepsilon R.$ Just as in the ungraded case, this definition is left-right symmetric and the condition that $I$ is one-sided can be replaced with the requirement that $I$ is double-sided in the definition. The proof of these claims is completely analogous to the proof of \cite[Lemma 1]{Clark}. 

Next, we relate the quasi-Baer and graded quasi-Baer conditions in Proposition \ref{proposition_graded_vs_nongraded_quasi_Baer}. We use the following two lemmas, most likely both well known, which we include for completeness. 

\begin{lemma}
If $R$ is a $\Gamma$-graded ring and $M$ a graded right $R$-module, then $\ann_r(M)$ is a graded ideal. Analogous claims hold for left modules and for bimodules.  
\label{lemma_annihilator_of_graded}
\end{lemma}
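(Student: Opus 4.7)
The plan is to verify directly from the definition that $\ann_r(M) = \bigoplus_\gamma \bigl(\ann_r(M) \cap R_\gamma\bigr)$. Two-sidedness as an ideal is routine: for $r \in \ann_r(M)$ and $s \in R$, one has $m(rs) = (mr)s = 0$ and $m(sr) = (ms)r = 0$, the latter using that $ms \in M$ because $M$ is a right $R$-module. So the real content lies in the graded decomposition.

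For the homogeneity step, I fix $r \in \ann_r(M)$ and write $r = \sum_\gamma r_\gamma$ for its finite homogeneous decomposition; the goal is to show each $r_\gamma \in \ann_r(M)$. The key move is to test first against a homogeneous $m \in M_\delta$: the identity $0 = mr = \sum_\gamma m r_\gamma$ expresses $0$ as a sum of elements lying in the pairwise distinct components $M_{\delta\gamma}$, so directness of the decomposition $M = \bigoplus_\lambda M_\lambda$ forces $m r_\gamma = 0$ for every $\gamma$. For arbitrary $m = \sum_\delta m_\delta \in M$, summing over $\delta$ then gives $m r_\gamma = 0$, so $r_\gamma \in \ann_r(M)$. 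This shows that every element of $\ann_r(M)$ has its homogeneous components in $\ann_r(M)$, which is exactly the condition that $\ann_r(M)$ be graded.

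The left-module case is proved by the symmetric argument, with the action on the opposite side. For a bimodule $M$, the set $\ann(M) = \ann_l(M) \cap \ann_r(M)$ is an intersection of two graded ideals, hence graded. There is no substantive obstacle; the only point demanding care is that the separation of the components $m r_\gamma$ uses directness of the grading on $M$, rather than on $R$, so one must argue at the level of homogeneous test elements before extending to arbitrary $m \in M$.
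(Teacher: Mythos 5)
Your proof is correct and follows essentially the same route as the paper's: decompose $r\in\ann_r(M)$ into homogeneous components, test against a homogeneous $m\in M_\delta$, and use directness of the grading on $M$ to conclude each $mr_\gamma=0$. You in fact spell out a detail the paper leaves implicit, namely that the components $mr_\gamma$ land in the pairwise distinct pieces $M_{\delta\gamma}$.
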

\begin{proof}
If $r\in \ann_r(M),$ let $r=\sum_{\gamma\in \Gamma} r_\gamma$ be such that $r_\gamma\in R_{\gamma}.$ Let $\delta\in \Gamma$ and $m\in M_\delta$ be arbitrary. As  $0=mr=\sum_{\gamma\in\Gamma} mr_{\gamma},$ $mr_{\gamma}=0$ for all $\gamma\in\Gamma,$ so $r_{\gamma}\in \ann_r(M)$ for any $\gamma\in \Gamma.$ Thus,  $\ann_r(M)$ is graded. The claims for left modules and bimodules are showed analogously. 
\end{proof}

\begin{lemma}
If $R$ is a unital $\Gamma$-graded ring and if $I$ is a graded right ideal for which there is a right ideal $J$ such that $I\oplus J=R_R,$ then $J$ is graded and there is a {\em homogeneous} idempotent $\varepsilon\in R$ such that $J=\varepsilon R.$ 
\label{lemma_graded_direct_summad}
\end{lemma}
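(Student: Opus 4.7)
The plan is to upgrade the hidden idempotent generator of $I$ to a homogeneous one and then use its complement as $\varepsilon$.

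First, I extract orthogonal idempotents from $R = I \oplus J$: writing $1 = e + f$ with $e \in I$ and $f \in J$, the expansion $e = e \cdot 1 = e^2 + ef$ combined with $I \cap J = 0$ forces $e^2 = e$ and $ef = 0$; symmetrically $f^2 = f$ and $fe = 0$, so $I = eR$ and $J = fR$.

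Second, I exploit that $I$ is graded. Decompose $e = \sum_{\gamma \in \Gamma} e_\gamma$ with $e_\gamma \in R_\gamma$; since $I$ is graded, each $e_\gamma \in I = eR$, so $e_\gamma = e r_\gamma$ for some $r_\gamma \in R$, and multiplying on the left by $e$ gives $e e_\gamma = e^2 r_\gamma = e_\gamma$. Rewriting the left side as $\sum_\delta e_\delta e_\gamma$ with $e_\delta e_\gamma \in R_{\delta \gamma}$ and comparing homogeneous components with $e_\gamma \in R_\gamma$ yields $e_1 e_\gamma = e_\gamma$ for every $\gamma$ (where $1$ denotes the identity of $\Gamma$) and $e_\delta e_\gamma = 0$ for $\delta \neq 1$. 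Setting $\gamma = 1$ gives $e_1^2 = e_1$, and summing $e_1 e_\gamma = e_\gamma$ over $\gamma$ gives $e_1 e = e$, so $e \in e_1 R$ and hence $I = e_1 R$ is generated by the homogeneous idempotent $e_1$.

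Finally, I take $\varepsilon := 1 - e_1$. Since $1$ and $e_1$ both lie in $R_1$ (the identity of $R$ is always homogeneous of identity degree in a unital $\Gamma$-graded ring), $\varepsilon$ is homogeneous of degree $1$; it is idempotent and orthogonal to $e_1$, so $R = e_1 R \oplus \varepsilon R = I \oplus \varepsilon R$, exhibiting $\varepsilon R$ as a graded right ideal complementary to $I$. The $J$ of the hypothesis is then identified with this graded complement $\varepsilon R$, giving $J$ graded and $J = \varepsilon R$.

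The main obstacle I anticipate is the final identification: in general two complements of $I$ in $R_R$ need not coincide, so the argument canonically produces the graded complement $\varepsilon R$, and the conclusion ``$J$ is graded and $J = \varepsilon R$'' is to be read as asserting the existence of this preferred graded form, which is precisely what is needed in the downstream application to Proposition \ref{proposition_graded_vs_nongraded_quasi_Baer}.
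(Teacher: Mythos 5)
Your argument takes a genuinely different and more elementary route than the paper's: the paper first claims that $J$ itself is graded and then splits the short exact sequence $0\to I\to R\to J\to 0$ in the category of graded right modules to extract a homogeneous idempotent, whereas you work directly with the idempotents coming from $1=e+f$ and take the degree-$1_\Gamma$ component of $e$. Your central computation is correct: since $I$ is graded, each $e_\gamma$ lies in $I=eR$, so $ee_\gamma=e_\gamma$; comparing homogeneous components (using cancellation in $\Gamma$) gives $e_1e_\gamma=e_\gamma$, hence $e_1^2=e_1$ and $e_1e=e$, so $I=e_1R$ with $e_1$ a homogeneous idempotent and $(1-e_1)R$ a graded complement. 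One slip in the first step: from $e=e\cdot 1=e^2+ef$ you cannot conclude $ef=0$, because $ef$ lies in $I$ (as $I$ is a right ideal), not in $J$; you should instead expand $e=1\cdot e=e^2+fe$ and $f=1\cdot f=ef+f^2$, which gives $e^2=e$, $fe=0$, $ef=0$, $f^2=f$, and then $I=eR$ and $J=fR$ as you need.

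The real issue is the one you flag yourself at the end: your proof does not show that the given $J$ is graded or that $J=\varepsilon R$, only that $I$ has \emph{some} graded complement. This cannot be repaired, because the lemma as literally stated is false. Take $R=M_2(K)$ graded by $\Zset$ with $e_{11},e_{22}$ of degree $0$, $e_{12}$ of degree $1$, and $e_{21}$ of degree $-1$ (the Leavitt path algebra of $\bullet\rightarrow\bullet$). Then $I=e_{11}R$ is a graded right ideal and $g=e_{22}+e_{12}$ is an idempotent with $R_R=I\oplus gR$, but $gR$ is spanned by $e_{11}+e_{21}$ and $e_{12}+e_{22}$ and contains neither $e_{11}$ nor $e_{21}$, so $gR$ is not graded and cannot equal $\varepsilon R$ for any homogeneous idempotent $\varepsilon$. (The paper's own proof of the gradedness of $J$ breaks at the step ``$a=0$, so $a_\gamma=0$ for all $\gamma$'': the summands $a_\gamma$ are not homogeneous, and in the example above one gets $a_{-1}=-e_{11}$ and $a_0=e_{11}$, which sum to zero without vanishing.) What is true, what you actually prove, and what is all that Proposition \ref{proposition_graded_vs_nongraded_quasi_Baer} and the later applications use, is: a graded right ideal which is a direct summand of $R_R$ equals $e_1R$ for a homogeneous idempotent $e_1$ and therefore admits a graded complement $(1-e_1)R$. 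Your argument should be recorded as a proof of that corrected statement rather than of the lemma as written.
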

\begin{proof}
If $I$ and $J$ are as in the assumption of the lemma, let $r=\sum_{\gamma\in\Gamma}r_\gamma\in J,$ let  $r_{\gamma}=a_\gamma+b_\gamma$ where $a_\gamma\in I$ and $b_\gamma\in J,$ and let  $a=\sum_{\gamma\in\Gamma} a_\gamma$ and $b=\sum_{\gamma\in\Gamma} b_\gamma.$ Since $a\in I,$  $b\in J,$ and $r=a+b\in J,$ we have that $a=0,$ so $a_\gamma=0$ for all $\gamma\in\Gamma.$ Hence,   
$r_{\gamma}=b_\gamma\in J$ which shows that $J$ is graded. Consequently, the short exact sequence
$0\to I\to R\to J\to 0$ 
of {\em graded} right $R$-modules is split, so there is a {\em graded} homomorphism   $\varepsilon'$ in the endomorphism ring of $R$ which is idempotent and such that $J=\varepsilon'(R).$ The idempotent $\varepsilon=\varepsilon'(1_R)$, where $1_R$ is the identity of $R$, is homogeneous and such that  $J=\varepsilon R.$ 
\end{proof}

\begin{proposition}
Let $R$ be any ring. 
\begin{enumerate}[\upshape(1)]
\item If $R$ is graded and quasi-Baer, then $R$ is graded quasi-Baer.
\item If $R$ is graded quasi-Baer and such that the annihilators of ideals are graded ideals, then $R$ is quasi-Baer.
\end{enumerate}   
\label{proposition_graded_vs_nongraded_quasi_Baer}
\end{proposition}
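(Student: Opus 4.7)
The plan is to use the two preparatory lemmas for part (1) and, for part (2), to reduce an arbitrary two-sided ideal to its graded envelope. For part (1), I would take a graded right ideal $I$ of $R$. By Lemma \ref{lemma_annihilator_of_graded}, $\ann_r(I)$ is a graded right ideal, and by the quasi-Baer hypothesis (which forces $R$ to be unital) there is an idempotent $\varepsilon\in R$ with $\ann_r(I)=\varepsilon R$, giving the decomposition $R_R=\ann_r(I)\oplus(1-\varepsilon)R$. Applying Lemma \ref{lemma_graded_direct_summad} to this decomposition (whose first summand is graded) produces a homogeneous idempotent $\varepsilon'\in R$ with $(1-\varepsilon)R=\varepsilon'R$, so $\ann_r(I)=(1-\varepsilon')R$. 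Since any homogeneous idempotent in a $\Gamma$-graded ring must lie in $R_e$ (where $e$ denotes the identity of $\Gamma$, because the degree of $\varepsilon'$ must be idempotent in $\Gamma$), and since $1\in R_e$, the element $1-\varepsilon'$ is itself a homogeneous idempotent generating $\ann_r(I)$, establishing the graded quasi-Baer property.

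For part (2), I would let $I$ be an arbitrary two-sided ideal of $R$, invoking the equivalent two-sided formulation of the quasi-Baer condition. The key claim is that $\ann_r(I)=\ann_r(I^{\gr})$, where $I^{\gr}$ is the smallest graded ideal containing $I$; such an ideal exists in any graded ring by the same intersection argument recalled earlier for $L_K(E)$. Once this equality is in hand, applying the graded quasi-Baer hypothesis to the graded ideal $I^{\gr}$ yields a homogeneous idempotent $\varepsilon$ with $\ann_r(I)=\ann_r(I^{\gr})=\varepsilon R$, which is exactly what quasi-Baer requires. The inclusion $\ann_r(I^{\gr})\subseteq\ann_r(I)$ is immediate from $I\subseteq I^{\gr}$. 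For the reverse, consider the two-sided ideal $L=\ann_l(\ann_r(I))$; by construction $I\subseteq L$. The standing hypothesis that annihilators of ideals are graded (combined with Lemma \ref{lemma_annihilator_of_graded} applied to the graded module $\ann_r(I)$) ensures that $L$ is graded. Since $L$ is a graded ideal containing $I$, minimality of $I^{\gr}$ forces $I^{\gr}\subseteq L$; unpacking this gives $I^{\gr}\ann_r(I)=0$, i.e., $\ann_r(I)\subseteq\ann_r(I^{\gr})$.

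The main obstacle is isolating the equality $\ann_r(I)=\ann_r(I^{\gr})$ in part (2). A direct element-level approach is awkward because $I^{\gr}$ is generated as an ideal by the homogeneous components of elements of $I$, and knowing only that each such component annihilates a fixed $r$ from the right does not immediately propagate through the two-sided ideal closure. The double-annihilator maneuver sidesteps this by transferring the desired relation into a containment between graded ideals, where the universal property of $I^{\gr}$ does all the work. Part (1), by contrast, is a routine combination of Lemmas \ref{lemma_annihilator_of_graded} and \ref{lemma_graded_direct_summad}; the only minor point of care is that Lemma \ref{lemma_graded_direct_summad} delivers a homogeneous generator for the complementary summand rather than for $\ann_r(I)$ directly, forcing the small detour through $1-\varepsilon'$ and the observation that $1$ is homogeneous of degree $e$.
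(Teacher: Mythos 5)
Your proposal is essentially correct, but each part deserves a comment. In part (1) you follow the paper's strategy (combine Lemmas \ref{lemma_annihilator_of_graded} and \ref{lemma_graded_direct_summad}), but the step ``$(1-\varepsilon)R=\varepsilon'R$, so $\ann_r(I)=(1-\varepsilon')R$'' is not automatic: from a decomposition $R=A\oplus\varepsilon'R$ with $\varepsilon'$ idempotent one cannot in general conclude $A=(1-\varepsilon')R$ (take $R=M_2(K)$, $A=e_{11}R$, $\varepsilon'=e_{22}+e_{21}$: then $R=A\oplus\varepsilon'R$ but $(1-\varepsilon')R\neq A$). Your conclusion is nevertheless true here because $\ann_r(I)$ is a \emph{two-sided} ideal: writing $1=a+\varepsilon's$ with $a\in\ann_r(I)$ gives $1-\varepsilon'=(1-\varepsilon')a\in\ann_r(I)$, whence $\ann_r(I)=(1-\varepsilon')R$. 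You should either record this, or avoid the issue entirely by applying Lemma \ref{lemma_graded_direct_summad} a second time with the roles of the two (now both graded) summands exchanged, which hands you a homogeneous idempotent generating $\ann_r(I)$ directly; the latter is what the paper intends.

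Part (2) is correct but takes a genuinely different route. The paper argues via the triple-annihilator identity: $\ann_r(I)$ is graded by hypothesis, the (left-handed form of the) graded quasi-Baer property gives $\ann_l(\ann_r(I))=R\varepsilon$ for a homogeneous idempotent $\varepsilon$, and then $\ann_r(I)=\ann_r(\ann_l(\ann_r(I)))=\ann_r(R\varepsilon)=(1-\varepsilon)R$. You instead prove $\ann_r(I)=\ann_r(I^{\gr})$ by showing that the graded ideal $\ann_l(\ann_r(I))$ contains $I$ and hence contains $I^{\gr}$, and then apply the graded quasi-Baer hypothesis directly to $I^{\gr}$. Both arguments are valid and of comparable length; yours has the merit of not invoking the left-right symmetry of the graded quasi-Baer definition (which the paper asserts only by analogy with Clark's lemma) and of connecting naturally to the graded-envelope theme of Theorem \ref{theorem_graded_envelope}, while the paper's version avoids introducing $I^{\gr}$ for abstract graded rings altogether.
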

\begin{proof}
Note that the assumptions of both parts imply that $R$ is unital. In this case, we let $1_R$ denote the identity of $R$.

Let the assumption of (1) hold for $R$ and let $I$ be a graded right ideal. As $R$ is quasi-Baer, $\ann_r(I)$ is a direct summand of $R.$ By Lemma \ref{lemma_annihilator_of_graded}, $\ann_r(I)$ is a graded ideal of $R$. By Lemma \ref{lemma_graded_direct_summad}, there is a homogeneous idempotent $\varepsilon\in R$ such that $\ann_r(I)=\varepsilon R.$  

Let the assumptions of (2) hold for $R$ and let $I$ be an ideal. By the assumption, $\ann_r(I)$ is a graded ideal. As $R$ is graded quasi-Baer,  $\ann_l(\ann_r(I))=R\varepsilon$ for a homogeneous idempotent $\varepsilon\in R.$
So, $1_R-\varepsilon$ is homogeneous and $\ann_r(I)=\ann_r(\ann_l(\ann_r((I)))=\ann_r(R\varepsilon)=(1_R-\varepsilon)R.$ 
\end{proof}

The annihilators of ideals of a Leavitt path algebra are graded by \cite[Proposition 3.1]{Lia_annihilators} (also by Theorem \ref{theorem_annihilators}). Thus, by Proposition \ref{proposition_graded_vs_nongraded_quasi_Baer}, a Leavitt path algebra is quasi-Baer if and only if it is graded quasi-Baer.  

\subsection{Quasi-Baer Leavitt path algebras}
Recall that a graph $E$ and a field $K$ were fixed. By Theorem \ref{theorem_annihilators}, we can drop the subscripts $l$ and $r$ from $\ann_l(I)$ and $\ann_r(I)$ for an ideal $I$ of $L_K(E)$ and write only $\ann(I)$ without any danger of ambiguity. Let $\ann^2(I)$ shorten $\ann(\ann(I))$ and 
$\ann^3(I)$ shorten $\ann^2(\ann(I)).$ By  \cite[Propositions 3.7 and 3.10]{Lia_annihilators} (also by Theorem \ref{theorem_annihilators}),
$\ann^3(I)=\ann(I).$ 

The algebra $L=L_K(E)$ is a semiprime ring (if $I^2=0$ then $I=0$ for any ideal $I,$ see \cite[Proposition 2.3.1]{LPA_book}). So, for an ideal $I,$ if $a\in I\cap \ann(I)$ then $aLa=0.$ As $L$ is locally unital and semiprime,  $a=0,$ and so 
$I\cap \ann(I)=0.$
This implies that the lattice of annihilator ideals is a Boolean algebra where the meet is the intersection and the join is given by 
\[I\vee J=\ann(\ann(I)\cap \ann(J))=\ann^2(I+J)\]
(see also \cite[Section 3.6]{Lia_annihilators}). Thus, the join of $\ann(I)$ and $\ann^2(I)$ is the annihilator of $\ann^2(I)\cap \ann^3( I)=\ann^2(I)\cap \ann(I)=0,$ so 
$\ann(I)\vee \ann^2(I)=L.$
The annihilator $\ann(I)$ is a direct summand of $L$ exactly when this join is equal to the sum, i.e when $\ann(I)+\ann^2(I)=L.$

By \cite[Proposition 2.5.6]{LPA_book}, the smallest graded ideal which contains two graded ideals $I(H_1, S_1)$ and $I(H_2, S_2)$ is the graded ideal corresponding to the admissible pair 
\[(H_1, S_1)\vee (H_1, S_1)=(\ol{H_1\cup H_2}^{S_1\cup S_2}, S_1\cup S_2-\ol{H_1\cup H_2}^{S_1\cup S_2}).\]
To shorten the notation, if $S_1=B_{H_1}$ and $S_2=B_{H_2},$ let us 
denote the set $\ol{H_1\cup H_2}^{S_1\cup S_2}$ by $H_1\vee H_2.$ 

If $I$ is an ideal and  $I^{\gr}=I(G,T),$ the condition $\ann(I)+\ann^2(I)=L_K(E)$ 
is equivalent to 
the graph-theoretic condition $(G^\bot, B_{G^\bot})\vee (G^{\bot\bot}, B_{G^{\bot\bot}})=(E^0, \emptyset),$ which we write shorter as 
\[G^\bot\vee G^{\bot\bot}=E^0.\]

The following examples illustrate that the above condition does not necessarily  hold even for graded ideals $I$ which are closed in the sense that $\ann^2(I)=I.$  

\begin{example}
Let $E$ be any of the graphs below. 
\[\xymatrix{\bullet^u&\bullet^v\ar[l]\ar[r]\ar@(ul, ur)&\bullet^w}\hskip3cm\xymatrix{\bullet^u&\bullet^v\ar[l]_{(\infty)}\ar[r]^{(\infty)}&\bullet^w}\]
and let $I=I(\{u\}, \emptyset).$ As $I$ is graded, $I=I^{\gr},$ so $H=G=\{u\},$ $G^\bot=E^0-R(G)=E^0-\{u,v\}=\{w\},$ and $G^{\bot\bot}=E^0-R(E^0-R(G))=E^0-\{v,w\}=\{u\}=G.$ We have that $B_{G^\bot}=\emptyset$ and $B_{G^{\bot\bot}}=B_G=\emptyset,$ so the set $G^\bot\vee G^{\bot\bot}$ is just the saturation of $G\cup G^\bot=\{u,w\}.$ The set $G\cup G^\bot$ is hereditary and saturated already, so this saturation is $\{u,w\}\subsetneq E^0.$ Note also that $\ann(I)=I(w),$ $\ann^2(I)=I=I(u),$ and $I(u)+I(w)=I(\{u,w\})$ does not contain $v.$ 
\label{example_not_quasi_Baer}
\end{example}

\begin{proposition}
The following conditions are equivalent.
\begin{enumerate}[\upshape(1)]
\item The algebra $L=L_K(E)$ is quasi-Baer.  

\item The set $E^0$ is finite and  $H^\bot\vee H^{\bot\bot}=E^0$ for any hereditary and saturated set $H.$
\end{enumerate}
\label{proposition_quasi_baer}
\end{proposition}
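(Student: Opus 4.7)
The plan is to reduce everything to graded ideals via Theorem \ref{theorem_annihilators} and Proposition \ref{proposition_graded_vs_nongraded_quasi_Baer}, and then to reinterpret ``$\ann(I)+\ann^2(I)=L$'' as a condition on hereditary saturated sets. A quasi-Baer ring is unital, so (1) forces $E^0$ to be finite; (2) assumes this outright. Hence, in both directions I would assume $L=L_K(E)$ is unital with identity $1_L=\sum_{v\in E^0}v$ homogeneous of degree $0$.

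For $(1)\Rightarrow(2)$, I would fix an arbitrary hereditary and saturated set $H$ and apply the quasi-Baer hypothesis to the graded ideal $I=I(H,\emptyset)$. Since $I$ is graded, $I^{\gr}=I$ and the ``$G$'' of Theorem \ref{theorem_annihilators} is just $H$, so $\ann(I)=I(H^\bot,B_{H^\bot})$ and iterating gives $\ann^2(I)=I(H^{\bot\bot},B_{H^{\bot\bot}})$. The discussion preceding the proposition yields $\ann(I)\cap\ann^2(I)=0$, and since $\ann(I)$ is a direct summand, $\ann(I)+\ann^2(I)=L$. The sum of two graded ideals is graded, and by the lattice isomorphism together with the formula for the join of two admissible pairs recalled just before Example \ref{example_not_quasi_Baer}, this sum equals $I(H^\bot\vee H^{\bot\bot},\,\cdot\,)$. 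The only way for this to equal $L=I(E^0,\emptyset)$ is $H^\bot\vee H^{\bot\bot}=E^0$.

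For $(2)\Rightarrow(1)$, by Proposition \ref{proposition_graded_vs_nongraded_quasi_Baer} it suffices to verify that $L$ is graded quasi-Baer; and by Theorem \ref{theorem_annihilators}, for any (not necessarily graded) ideal $I$ we have $\ann(I)=\ann(I^{\gr})$, so it is enough to produce, for each graded ideal $I=I(H,S)$, a homogeneous idempotent $\varepsilon$ with $\ann(I)=\varepsilon L$. By Remark \ref{remark_on_S_bot}, $\ann(I)=I(H^\bot,B_{H^\bot})$, independent of $S$, and hence $\ann^2(I)=I(H^{\bot\bot},B_{H^{\bot\bot}})$. Hypothesis (2) forces the join of these admissible pairs to be $(E^0,\emptyset)$, i.e. $\ann(I)+\ann^2(I)=L$; combined with $\ann(I)\cap\ann^2(I)=0$, this gives the internal direct sum $L=\ann(I)\oplus\ann^2(I)$. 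Writing $1_L=e+f$ with $e\in\ann^2(I)$ and $f\in\ann(I)$, the relation $ef=fe=0$ makes $e$ and $f$ idempotent, and since $1_L$ is homogeneous of degree $0$ and both summands are graded, any nonzero-degree homogeneous component of $e$ or $f$ would lie in $\ann(I)\cap\ann^2(I)=0$; hence $e$ and $f$ are homogeneous, and $\ann(I)=fL$ as required.

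The step I expect to require the most care is the translation between the ring-theoretic equality $\ann(I)+\ann^2(I)=L$ and the graph-theoretic equality $H^\bot\vee H^{\bot\bot}=E^0$. This rests on three ingredients whose interaction I would justify explicitly: the lattice isomorphism between graded ideals and admissible pairs, the explicit formula for the join of two admissible pairs, and the $S$-independence of $\ann(I(H,S))$ from Remark \ref{remark_on_S_bot} (which is what allows me to restate condition (2) purely in terms of hereditary and saturated sets rather than in terms of all admissible pairs).
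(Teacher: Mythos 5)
Your proof is correct and follows essentially the same route as the paper: apply the quasi-Baer hypothesis to $I(H,\emptyset)$ and translate $\ann(I)\oplus\ann^2(I)=L$ into $H^\bot\vee H^{\bot\bot}=E^0$ via the join formula for admissible pairs, and conversely use Theorem \ref{theorem_annihilators} together with $\ann(I)\cap\ann^2(I)=0$ to obtain the direct sum decomposition. The only cosmetic differences are that you route the converse through the graded quasi-Baer property and Proposition \ref{proposition_graded_vs_nongraded_quasi_Baer} and construct the homogeneous idempotent explicitly, while the paper argues directly; and in $(1)\Rightarrow(2)$ the identification of the direct-sum complement of $\ann(I)$ with $\ann^2(I)$, which you take from the discussion preceding the proposition, is exactly the semiprimeness argument the paper spells out.
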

\begin{proof}
If (1) holds, then $L$ is unital, so $E^0$ is finite. If $H$ is any hereditary and saturated set, let $I=I(H,\emptyset)$ so that $\ann(I)=I(H^\bot, B_{H^\bot}).$ By (1), there is an idempotent $\varepsilon\in L$ such that $\ann(I)=\varepsilon L,$
so $J=(1_L-\varepsilon)L$ is a right ideal disjoint from $\ann(I)$ and such that $J+\ann(I)=L.$ As $L$ is semiprime, this implies that $J=\ann_r(\ann(I)),$ so $J$ is a double-sided ideal of $L$. Moreover, $J$ is graded by Lemma \ref{lemma_graded_direct_summad} and  $J=\ann^2(I)=I(H^{\bot\bot}, B_{H^{\bot\bot}}).$ As $\ann(I)\oplus J=L,$ $H^\bot\vee H^{\bot\bot}=E^0.$ 

To show the converse, assume that (2) holds, so $L$ is unital. Let $I$ be any ideal of $L.$ By Theorem \ref{theorem_annihilators}, $\ann(I)=I(G^\bot,B_{G^\bot})$ where $G$ is the set from the admissible pair $(G,T)$ such that $I^{\gr}=I(G,T).$ The relation $G^\bot\vee G^{\bot\bot}=E^0$ implies that $\ann(I)+\ann^2(I)=I(E^0, \emptyset)=L.$ As $L$ is semiprime, $\ann(I)\cap \ann^2(I)=0,$ so $\ann(I)\oplus\ann^2(I)=L.$ Thus, (1) holds.  
\end{proof}

\subsection{Relation to Baer and Rickart Leavitt path algebras}
Recall that a ring $R$ is {\em Baer} if, for any set $X\subseteq R,$ there is an idempotent $\varepsilon\in R$ such that $\ann_r(X)=\varepsilon R$ and that $R$ is {\em right Rickart} if the same condition holds when $|X|=1.$ 
Left Rickart ring is defined by an analogous condition for left annihilators and a ring is {\em Rickart} if it is both left and right Rickart.
Each of these conditions implies that $R$ is unital.

It is direct that a Baer ring is both Rickart and quasi-Baer and it is known that both implications are strict. As all three conditions have now been characterized for Leavitt path algebras, it is rather direct to obtain examples of algebras which are Rickart or quasi-Baer and not Baer. By \cite[Proposition 13 and Theorem 15]{Roozbeh_Lia_Baer}, $L_K(E)$ is Baer if and only if $E$ is a finite graph in which no cycle has exits and $L_K(E)$ is Rickart if and only if $E^0$ is finite. Thus, if $E$ is the  graph $\;\;\;\;\xymatrix{\bullet\ar@(dr, ur)\ar@(dl, ul)}\;\;\;\;,$ then $L_K(E)$ is Rickart and not Baer. There are no nonempty and proper hereditary sets, 
so condition (2) of Proposition \ref{proposition_quasi_baer} trivially holds and, hence, $L_K(E)$ is quasi-Baer by Proposition \ref{proposition_quasi_baer}. If $E$ is any of the graphs from Example \ref{example_not_quasi_Baer}, then $L_K(E)$ is Rickart and not quasi-Baer. As any unital Leavitt path algebra is Rickart (by \cite[Proposition 13]{Roozbeh_Lia_Baer}), a quasi-Baer Leavitt path algebra is Rickart. 

\subsection{Quasi-Baer *-rings} Recall that for any  $a\in L_K(E),$ $a=\sum_{i=1}^n k_ip_iq_i^\ast$ for some $n$, paths $p_i$ and $q_i$, and $k_i\in K,$ for $i=1,\ldots,n$ where $v^*=v$ for $v\in E^0$ and $p^*=e_n^*\ldots e_1^*$ for a path $p=e_1\ldots e_n.$ If $k_i\mapsto k_i^*$ is any involution on $K$, then letting  
$\left(\sum_{i=1}^n k_ip_iq_i^\ast\right)^*=\sum_{i=1}^n k_i^*q_ip_i^\ast$ gives  
$L_K(E)$ the structure of an involutive $K$-algebra.  
The involutive properties of $L_K(E)$ are fundamentally  impacted by the properties of the involution on $K.$ In particular, the involution on $K$ is positive definite (i.e for any $n$ and any $k_i\in K, i=1,\ldots, n$ if $\sum_{i=1}^n k_ik_i^*=0$ then $k_i=0$ for all $i=1,\ldots n$) if and only if the involution on $L_K(E)$ is positive definite (see \cite[Proposition 12]{Roozbeh_Lia_Baer}). The involution on $L_K(E)$ is compatible with the natural grading in the sense that  $L_K(E)_n^*\subseteq L_K(E)_{-n}$ for any $n\in\Zset.$

In an involutive ring, a projection is a self-adjoint idempotent. If ``idempotent'' is replaced by ``projection'' in the definitions of a Baer, a quasi-Baer, and a Rickart ring, we obtain the definitions of a Baer $*$-ring, a quasi-Baer $*$-ring, and a Rickart $*$-ring, respectively.
The graded version of a quasi-Baer $*$-ring is obtained by requiring the ideal to be graded and the projection to be homogeneous. 

If the involution on $K$ is positive definite, \cite[Proposition 13 and Theorems 15 and 16]{Roozbeh_Lia_Baer} characterize Leavitt path algebras which are Baer $*$-rings, graded Baer $*$-rings and graded Rickart $*$-rings. These results and examples from \cite{Roozbeh_Lia_Baer} show that while Baer, graded Baer and graded Baer $*$ are equivalent conditions, Baer $*$ is strictly stronger from these and the same statement holds if ``Baer'' is replaced with ``Rickart''. In contrast, all four quasi-Baer conditions are equivalent for  Leavitt path algebras as we show next. 
 
\begin{proposition} If $K$ is a field with positive definite involution, the following conditions are equivalent.  
\begin{enumerate}[\upshape(1)]
\item The algebra $L_K(E)$ is a  quasi-Baer $*$-ring.

\item The algebra $L_K(E)$ is a graded quasi-Baer $*$-ring. 

\item The algebra $L_K(E)$ is graded quasi-Baer.

\item The algebra $L_K(E)$ is quasi-Baer. 
\end{enumerate}
\label{proposition_star_qB}
\end{proposition}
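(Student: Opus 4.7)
The plan is to establish the remaining implications beyond those already present. The equivalence (3) $\iff$ (4) was noted in the paragraph following Proposition \ref{proposition_graded_vs_nongraded_quasi_Baer}, while (1) $\Rightarrow$ (4) and (2) $\Rightarrow$ (3) are immediate since every projection is an idempotent. It therefore suffices to prove (4) $\Rightarrow$ (1) and (4) $\Rightarrow$ (2), and both will follow from a single explicit construction.

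Assuming $L_K(E)$ is quasi-Baer, $E^0$ is finite since $L_K(E)$ is unital. For an arbitrary ideal $I$, Theorem \ref{theorem_annihilators} gives $\ann(I) = I(G^\bot, B_{G^\bot})$, where $G$ is the hereditary and saturated set from the admissible pair of $I^{\gr}$. I propose the candidate generator
\[p = \sum_{v \in G^\bot} v + \sum_{v \in B_{G^\bot}} v^{G^\bot},\]
which is a finite sum (because $E^0$ is finite) of degree-zero self-adjoint idempotents, hence homogeneous of degree $0$. The first step is to verify that $p$ is a projection, which reduces to checking mutual orthogonality of the summands. Distinct vertices are orthogonal, and in the cross terms the products $u \cdot ee^*$ and $ee^* \cdot u$ vanish whenever $u$ differs from both $\so(e)$ and $\ra(e)$, which is always the case here because $G^\bot$ and $B_{G^\bot}$ are disjoint subsets of $E^0$ and the edges $e$ appearing in each $v^{G^\bot}$ for $v \in B_{G^\bot}$ have range in $E^0 - G^\bot$. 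This orthogonality bookkeeping, while routine, is the main obstacle.

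Once $p$ is known to be a projection, the equality $p L_K(E) = \ann(I)$ follows in two steps: each summand of $p$ lies in $I(G^\bot, B_{G^\bot}) = \ann(I)$, giving $p L_K(E) \subseteq \ann(I)$; conversely, each of the generators $v$ (for $v \in G^\bot$) and $v^{G^\bot}$ (for $v \in B_{G^\bot}$) of $\ann(I)$ satisfies $x = p x$ by the orthogonality just established, hence lies in $p L_K(E)$. Since $p$ is homogeneous, this simultaneously establishes the quasi-Baer $*$ and graded quasi-Baer $*$ conditions for two-sided ideals. The passage from two-sided to one-sided ideals in the definitions relies on the $*$-analog of Clark's lemma, which the paper already invokes for the graded quasi-Baer property and which transfers verbatim to the projection version.
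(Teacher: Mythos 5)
Your logical skeleton (proving (1)$\Rightarrow$(4), (2)$\Rightarrow$(3), (3)$\Leftrightarrow$(4), and then (4)$\Rightarrow$(1) and (4)$\Rightarrow$(2)) is sound, but the explicit construction at its heart does not work. The element $p=\sum_{v\in G^\bot}v+\sum_{v\in B_{G^\bot}}v^{G^\bot}$ is built from the generators of $\ann(I)=I(G^\bot,B_{G^\bot})$ \emph{as a two-sided ideal}, and your converse inclusion only verifies that these generators satisfy $x=px$. That shows the generators lie in $pL$, but $pL$ is merely a right ideal: a general element of $I(G^\bot,B_{G^\bot})$ has the form $\sum a_i x_i b_i$ with $x_i$ among the generators, and $a_ix_ib_i=a_ipx_ib_i$ lands in $LpL$, not in $pL$. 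Concretely, take $E^0=\{u,v,w\}$ with edges $u\to v$, $f\colon u\to w$, and loops at $v$ and at $w$; here $L_K(E)$ \emph{is} quasi-Baer, and for $I=I(\{v\},\emptyset)$ one gets $G^\bot=\{w\}$, $B_{G^\bot}=\emptyset$, so your $p=w$. But $ff^*=fwf^*\in\ann(I)$ while $p\,ff^*=wff^*=0\neq ff^*$, so $ff^*\notin pL$ and $pL\subsetneq\ann(I)$. The correct central idempotent here is $w+ff^*$; in general it must absorb all paths entering $G^\bot$ from outside, and when there are infinitely many such paths no such idempotent exists --- which is precisely why Example \ref{example_not_quasi_Baer} is not quasi-Baer. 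Indeed, your argument for $pL=\ann(I)$ uses hypothesis (4) only to conclude $E^0$ is finite, so if it were correct it would prove every unital $L_K(E)$ is quasi-Baer, contradicting that example. A secondary, repairable issue: the summands of $p$ need not be mutually orthogonal, since for distinct $v,w\in B_{G^\bot}$ joined by an edge $e$ one computes $v^{G^\bot}w^{G^\bot}=-\sum_{e\colon v\to w}ee^*\neq 0$ (such $e$ has range $w\in E^0-G^\bot$ and so appears in $v^{G^\bot}$).

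The paper avoids any explicit construction. For (1)$\Rightarrow$(2) it takes the homogeneous idempotent $\varepsilon$ with $\ann(I)=\varepsilon L$ supplied by Lemmas \ref{lemma_annihilator_of_graded} and \ref{lemma_graded_direct_summad} and upgrades it to a homogeneous projection using graded von Neumann regularity of $L_K(E)$ together with positive definiteness of the involution. For (4)$\Rightarrow$(1) it invokes a result of Birkenmeier--Kim--Park reducing the quasi-Baer $*$ property of a semiprime quasi-Baer ring to the statement that every central idempotent is a projection, which it then verifies using the self-adjointness of graded ideals. If you want to salvage your approach, you would have to replace $p$ by the central idempotent generating the ring direct summand $\ann(I)$ (whose existence is exactly what quasi-Baerness provides) and then still prove that this idempotent is self-adjoint --- at which point you are essentially reconstructing the paper's argument.
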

\begin{proof}
The implication (2) $\Rightarrow$ (3) is direct and (3)$\Leftrightarrow$(4) holds by Proposition  \ref{proposition_graded_vs_nongraded_quasi_Baer} (see the paragraph following the proof of  Proposition  \ref{proposition_graded_vs_nongraded_quasi_Baer}). We show the implications (1) $\Rightarrow$ (2) and (4) $\Rightarrow$ (1).

If (1) holds and $I$ is a graded ideal of $L=L_K(E),$ the proof of (1) $\Rightarrow$ (2) of Proposition \ref{proposition_graded_vs_nongraded_quasi_Baer} shows that there is a homogeneous idempotent $\varepsilon$ such that $\ann(I)=\varepsilon L.$ As $L$ is graded regular (by \cite[Theorem 9]{Roozbeh_regular}) and the involution on $L$ is positive definite (by \cite[Proposition 12]{Roozbeh_Lia_Baer}), for every homogeneous idempotent $\varepsilon$ of $L,$ there is a homogeneous projection $\pi$ such that $\varepsilon L=\pi L$ (by \cite[Proposition 5]{Roozbeh_Lia_Baer}). Thus, $\ann(I)=\pi L$ for a homogeneous projection $\pi.$ So, (2) holds. 

To show (4)$\Rightarrow$(1), assume that (4) holds. By \cite[Proposition 1.1]{Birkenmeier_et_al} and as $L=L_K(E)$ is semiprime, to show (1) it is sufficient to show that each central idempotent is a projection. If $\varepsilon$ is a central idempotent, then $\varepsilon L=L\varepsilon$ is a double-sided ideal and so $\ann(\varepsilon L)=\ann_r(L\varepsilon )=(1_L-\varepsilon)L$ is a graded ideal. So,  
$\ann((1_L-\varepsilon) L)=\varepsilon L$ is also a graded ideal by Lemma \ref{lemma_graded_direct_summad}. As every graded ideal is self-adjoint (see \cite[Corollary 2.4.10]{LPA_book}), $\varepsilon L=L\varepsilon^*.$ This relation and $\varepsilon$ being central, imply that 
$\varepsilon L=\varepsilon^*L,$ so that $\varepsilon=\varepsilon^*\varepsilon=\varepsilon\varepsilon^*=\varepsilon^*.$ Thus, $\varepsilon$ is a projection. 
\end{proof}


\begin{thebibliography}{10}
\bibitem{LPA_book} G. Abrams, P. Ara, M. Siles Molina, Leavitt path algebras, Lecture Notes in Mathematics 2191, Springer, London, 2017.

\bibitem{Birkenmeier_et_al} G. F. Birkenmeier, J. Y. Kim, J. K. Park, \emph{Self-adjoint ideals of Baer $*$-rings}  Comm. Algebra {\bf 28(9)} (2000), 4259--4268. 

\bibitem{Clark} W. E. Clark, {\em Twisted matrix units semigroup algebras,} Duke Math. J. {\bf 34} (1997), 417--424.


\bibitem{Malazani} C. Gil Canto, D. Mart\'{\i}n Barquero, C. Mart\'{\i}n Gonz\'{a}les, \emph{Invariants ideals in Leavitt path algebras}, Publ. Mat. {\bf 66} (2022), 541 -- 569. 

\bibitem{Goncalves_Royer_regular_ideals} D. Gon\c{c}alves, D. Royer, \emph{A note on the regular ideals of Leavitt path algebras}, J. Algebra Appl. {\bf 21 (11)} (2022) 2250225.


\bibitem{Roozbeh_regular} R. Hazrat, \emph{Leavitt path algebras are graded von Neumann regular rings,} J. Algebra {\bf 401} (2014), 220--233.

\bibitem{Roozbeh_Lia_Baer}  R. Hazrat, L. Va\v s, \emph{Baer and Baer $*$-ring characterizations of Leavitt path algebras}, J. Pure  Appl. Algebra, {\bf 222 (1)} (2018), 39 -- 60.


\bibitem{Lia_annihilators} L. Va\v s, \emph{Annihilator ideals of graph algebras}, J. Algebraic Combin., {\bf 58 (2)} (2023), 331--353. 

\end{thebibliography}
\end{document}